\tikzset{cross/.style={cross out, draw=black, minimum size=2*(#1-\pgflinewidth), inner sep=0pt, outer sep=0pt},
cross/.default={1pt}}
\theoremstyle{plain}
\newtheorem{theorem}{Theorem}
\newtheorem{lemma}[theorem]{Lemma}
\newtheorem{corollary}[theorem]{Corollary}
\newtheorem{remark}[theorem]{Remark}
\newtheorem{definition}[theorem]{Definition}
\theoremstyle{aim}
\newtheorem{conjecture}[theorem]{Conjecture}
\theoremstyle{conjecture}
\numberwithin{equation}{section}
\newcommand{\cP}{{\mathcal{P}}}
\newcommand{\F}{{\mathbb{F}_q}}
\newcommand{\PG}{\mathrm{PG}(3,q)}
\newcommand{\PGG}{\mathrm{PG}}
\newcommand{\C}{\mathcal{C}}
\newcommand{\Ima}{Im}
\newcommand{\PGL}{\mathrm{PGL}}
\newcommand{\Cu}{\mathbf{C}}
\newcommand{\cZ}{\mathcal{Z}}
\newcommand{\cO}{\mathcal{O}}
\newcommand{\cH}{\mathcal{H}}
\newcommand{\cL}{\mathcal{L}}
\newcommand{\bF}{\mathbb{F}}
\begin{document}
\title{On pencils of cubics on the projective line over finite fields of characteristic $>3$}

\author{G\"{u}lizar G\"{u}nay}
\address{G\"{u}lizar G\"{u}nay, Faculty of Engineering and Natural Sciences, Sabanc{\i} University, Tuzla, Istanbul 34956, Turkey}
\email{gunaygulizar@sabanciuniv.edu}

\author{Michel Lavrauw}
\address{Michel Lavrauw, Faculty of Engineering and Natural Sciences, Sabanc{\i} University, Tuzla, Istanbul 34956, Turkey}
\email{mlavrauw@sabanciuniv.edu}
\thanks{The authors were supported by {\em The Scientific and Technological Research Council of Turkey}, T\"UB\.{I}TAK (project no. 118F159)}
\begin{abstract}
In this paper we study combinatorial invariants of the equivalence classes of pencils of cubics on $\PGG(1,q)$, for $q$ odd and $q$ not divisible by 3. These equivalence classes are considered as orbits of lines in $\PG$, under the action of the subgroup $G\cong \PGL(2,q)$ of $\PGL(4,q)$ which preserves the twisted cubic $\C$ in $\PG$. In particular we determine the point orbit distributions and plane orbit distributions of all $G$-orbits of lines which are contained in an osculating plane of $\C$, have non-empty intersection with $\C$, or are imaginary chords or imaginary axes of $\C$.
\end{abstract}
\maketitle

\section{Introduction and motivation}
\bigskip Let $\PGG(n,q)$ denote the $n$-dimensional projective space over a finite field $\F$. We assume that $q$ is an odd prime power and not divisible by $3$. The projective linear group, denoted by $\PGL(n+1,q)$, is defined as the group of {\it projectivities} induced by the action of the general linear group ${\mathrm{GL}}(n+1,q)$ on $\PGG(n,q)$.

The cubic forms on $\PGG(1,q)$ form a four-dimensional vector space $W$, and subspaces of the projective space $\PGG(W)$ are called {\it linear systems of cubics}. One-dimensional linear systems are called \emph{pencils}. In this paper we are interested in the equivalence classes of pencils of cubics in $\PGG(1,q)$ under the action of $\PGL(2,q)$.

Pencils of cubics can also be seen as vectors in the space of partially symmetric tensors $S^3\bF_q^2\otimes \bF_q^2$, where $S^3\bF_q^2$ denotes the space of symmetric tensors in $\bF_q^2\otimes \bF_q^2\otimes \bF_q^2$. As such, our results fit into the larger research project of classifying tensors over finite fields \cite{talk}, and our approach is inspired by the classifications obtained in \cite{LaSh2015}, \cite{LaPo2020} and \cite{LaPoSh2020}. The combinatorial invariants which are studied here, were motivated by the combinatorial invariants for the orbits of nets of conics in studied in \cite{LaPoSh2021}.

In $\PGG(n,q)$, for $2\leq n \leq q-2$, a {\it normal rational curve} (NRC) is an algebraic curve projectively equivalent to \[\mathcal{K}=\{(1,t,t^2,...,t^n):t\in \F\}\cup\{(0,0,...,1)\}.\]
A normal rational curve in $\PG$ is called a \emph{twisted cubic}. The twisted cubic is left invariant under the action of a group $G\leq \PGL(4,q)$ isomorphic to $\PGL(2,q)$ (see e.g. Harris \cite[p. 118]{Harris1992}).

The classification of $G$-orbits of lines in $\PG$ is equivalent to the classification of pencils of cubics in $\PGG(1,q)$.

The $G$-orbits on points and planes of $\PG$ are well understood (see e.g. \cite{MR500485}). In this paper we study ten of the $G$-orbits on lines in $\PG$ and determine the combinatorial invariants called the point orbit distributions and plane orbit distributions.

\bigskip The twisted cubic in $\PG$ has some remarkable geometric properties which have led to many interesting applications.
It is well known that the twisted cubic is the intersection of three quadrics (see e.g. Harris \cite{Harris1992}). Also, the points on a twisted cubic form an arc (i.e. no four points are coplanar) of size $q+1$, and the related MDS code is known as the (extended) Reed-Solomon code; it has length $q+1$, dimension $4$ and minimum distance $q-2$. We refer to \cite{BaLa2019} for a recent survey on arcs (including proofs of several instances of the MDS conjecture).

\bigskip As further motivation for our work, we mention some results in which the twisted cubic plays a fundamental role. In \cite{MR500485} Bruen and Hirschfeld used the property that the twisted cubic is an arc to construct spreads of $\PG$ which do not contain any regulus. This leads to interesting examples of non-Desarguesian projective planes using the Andr\'e-Bruck-Bose construction of translation planes from spreads. Furthermore, Bonoli and Polverino \cite{BoPo2005} used the twisted cubic to prove the non-existence of certain spreads in the classical generalized hexagon $H(q^n)$, provided that $q$ and $n$ satisfy the bound from \cite{BaBlLa2003}. More recently, Bartoli et al. \cite{BaDaMaPa2020} studied certain properties of the twisted cubic in relation to optimal multiple covering codes.

\bigskip

The paper is organized as follows. In Section 2 we introduce the notation and terminology, and collect some of the results concerning the twisted cubic in $\PG$ which will be used throughout the paper.
In Section 3 we determine the point orbit distributions and the plane orbit distributions of the lines contained in osculating planes. This comprises most of the computations in this paper, and concerns the line orbits $\cL_i$, $i\in \{1,\ldots,5\}$. Section 4 contains the orbit distributions of the lines meeting the twisted cubic (orbits $\cL_6$, $\cL_7$, and $\cL_8$). These easily follow from the results of Section 3. In Section 5 the orbit distributions of the imaginary chords and imaginary axes are determined. These are the orbits $\cL_9$ and $\cL_{10}$.

%

\section{Preliminaries}

In this section we review some theory and definitions used in our study. They are well known and can be extracted from standard textbooks on finite projective geometry.

\bigskip Given the homogeneous polynomials $f_1,\ldots,f_t \in \F[X_0,X_1,..,X_n]$, we denote by ${\mathcal{Z}}(f_1,\ldots,f_t)$ the projective algebraic variety they define. Given an algebraic variety $\mathcal X$ defined (by homogeneous polynomials) over $\bF_q$, the set of $\mathbb{F}_{q^n}$-rational points of $\mathcal X$ in $\PGG(n,q^n)$ is denoted by ${\mathcal{X}}(\mathbb{F}_{q^n})$. For reasons of simplicity, these notations will sometimes be abused, for example we also write $\mathcal X$ instead of ${\mathcal{X}}(\bF_q)$, but it will be clear from the context what is meant.
A \emph{cubic} $\Cu$ in $\PGG(1,q)$ is the zero locus of a homogenous polynomial $f(X_0,X_1)$ of degree $3$ in $\F[X_0,X_1]$ (called a {\it cubic form on $\PGG(1,q)$}), in short $\Cu=\mathcal{Z}(f)$.

\medskip

The pencil defined by two cubics $\Cu_1=\mathcal{Z}(f_1)$ and $\Cu_2=\mathcal{Z}(f_2)$ is \[\mathcal{P}(\Cu_1,\Cu_2)=\{\mathcal{Z}(\alpha f_1+\beta f_2)\;|\;(\alpha,\beta)\in \PGG(1,q)\}.\] The intersection $\Cu_1\cap \Cu_2$ is called a \emph{base\;locus} and its points are called the \emph{base\;points} of the pencil.

The veronese map $\nu_3$ from $\PGG(1,q)$ to $\PGG(3,q)$ is defined by
 \[\nu_3: (x_0,x_1) \; \mapsto \; (x_0^3,x_0^2x_1,x_0x_1^2,x_1^3)\]
 and its image is the twisted cubic $\mathcal{C}$.
Consider also the map $\delta_3$ from the set of cubics of $\PGG(1,q)$ to the set of planes of $\PG$ where a cubic  $\Cu=\mathcal{Z}(f)$ with \[f= y_{30} X_0^3+y_{21} X_0^2X_1+y_{12} X_0X_1^2+ y_{03} X_1^3 \in \F[X_0,X_1]\] in $\PGG(1,q)$ is mapped by $\delta_3$ onto the plane with dual coordinates $[y_{30},y_{21},y_{12},y_{03}]$.

The image of a cubic $\Cu$ of $\PGG(1,q)$ under $\delta_3$ is a plane $\delta_3(\Cu)$ in  $\PGG(3,q)$ and the set of rational points of the cubic $\Cu$ corresponds to the set of points of the twisted cubic $\mathcal C$ which are contained in $\delta_3(\Cu)$.
Conversely, every cubic in $\PGG(1,q)$ is the preimage of a plane in $\PGG(3,q)$. To avoid confusion, the indeterminates of the coordinate ring of $\PGG(1,q)$ are denoted by $X_0,X_1$, while the indeterminates of the coordinate ring of $\PGG(3,q)$ are denoted by $Y_0,Y_1,Y_2,Y_3$.


For each $\alpha\in \mathrm{PGL}(2,q)$, there exists an $\tilde{\alpha}\in \PGL(4,q)$ such that $(\nu_3(P))^{\tilde{\alpha}}=\nu_3(P^{\alpha})$ for every $P\in \PGG(1,q)$. Explicitly, consider the homomorphism
\begin{eqnarray}\label{varphi}
   \varphi:\PGL(2,q) \rightarrow\PGL(4,q):\alpha \mapsto \tilde{\alpha}
\end{eqnarray}
where the image under $\varphi$ of a projectivity $\alpha$ induced by
$$\left(
                                                           \begin{array}{cc}
                                                             a & b \\
                                                             c & d \\
                                                           \end{array}
                                                         \right)
$$
is mapped to the projectivity induced by
\[\left(
                                                 \begin{array}{cccc}
                                                 a^3 &a^2b &ab^2&b^3 \\
                                                 3a^2c & a^2d+2abc & b^2c+2abd&3b^2d \\
                                                  3ac^2 & bc^2+2acd & ad^2+2bcd&3bd^2 \\
                                                   c^3 & c^2d & cd^2&d^3 \\
                                                   \end{array}
                                               \right).\]
Denote the image of $\varphi$ by $G$. Then $G\cong \PGL(2,q)$ and $G$ acts $3$-transitively on $\mathcal{C}$.\\



A line $\ell$ of $\PG$ is a \emph{chord} of $\mathcal{C}$ if it is a line joining  either two distinct  points of $\mathcal{C}$, two coinciding points, or a pair of conjugate points $(P,P^q)$ of $\mathcal{C}({\mathbb{F}}_{q^2})\setminus \mathcal{C}({\mathbb{F}}_{q})$. The line $\ell$ is, respectively, called a \emph{real chord}, a \emph{tangent} or an \emph{imaginary chord} of $\mathcal{C}$. Let $P(t)$ denote the point with coordinates $(1,t,t^2,t^3)$ of ${\mathcal{C}}({\mathbb{F}}_{q^n})$, for $t\in {\mathbb{F}}_{q^n}$, and $P(\infty)$ the point of $\mathcal C$ with coordinates $(0,0,0,1)$.
The chord $\ell=\langle P(t_1), P(t_2)\rangle$, $t_1,t_2\notin\{0,\infty\}$, is determined by the equations
\[ \left\{
            \begin{array}{ll}
             t_1t_2Y_1-(t_1+t_2)Y_2+Y_3=0, & \\
            -t_1t_2Y_0+(t_1+t_2)Y_1-Y_2=0. &
            \end{array}\right.
          \]
Moreover, the tangent line $\ell$ of $\C$ at the point $P(t)$, $t\notin\{0,\infty\}$, is determined by the equations
\[\Bigg\{\begin{array}{cccc}
                                                                                                                t^2Y_1-2tY_2+Y_3=0, \\
                                                                                                               -t^2Y_0+2tY_1-Y_2=0,
                                                                                                             \end{array}\]

while the tangent line of $\C$ at the point $P(0)$ is $ \mathcal{Z}(Y_2,Y_3)$ and the tangent line of $\C$ at $P(\infty)$ is $\mathcal{Z}(Y_0,Y_1)$.
A plane $\Pi$ intersecting the twisted cubic in a triple point $P$ is called the \emph{osculating plane} at $P$.
The osculating plane at $P(t)$ is denoted by $\Pi(t)$ and has equation
 \[\Pi(t): -t^3Y_0+3t^2 Y_1 -3t Y_2 +Y_3=0.\]

Consider a point $P_1(y_0,y_1,y_2,y_3)$ of $\PG$. 
Through $P_1$, there are $3$ osculating planes $\Pi(t_1)$, $\Pi(t_2)$ and $\Pi(t_3)$ of $\C$ with  \[{[}-y_0:3y_1:-3y_2:y_3{]}={[}1:-(t_1+t_2+t_3):(t_1t_2+t_1t_3+t_2t_3): -t_1t_2t_3{]}.\]
The plane $\Pi'$ joining the points $P(t_1)$, $P(t_2)$, $P(t_3)$ is
\[\Pi':y_3Y_0-3y_2Y_1+3y_1Y_2-y_0Y_3=0.\]

The map $\sigma$ mapping the point with coordinates $(y_0,y_1,y_2,y_3)$ to the plane with dual coordinates $[-y_3,3y_2$, $-3y_1,y_0]$
is a symplectic polarity of $\PG$, and the plane $P^{\sigma}$ is called the \emph{polar\;plane} of the point $P$.
Note that for a point $P(t)$ of $\C$, the polar plane of $P(t)$ is the osculating plane $\Pi(t)$ of $\C$ at $P(t)$.
The polar plane of a point $P$ not on $\C$ is the span of the contact points of the three osculating planes through $P$.

The osculating planes of $\C$ form the \emph{osculating developable} $\Gamma$ to $\C$. The symplectic polarity $\sigma$ sends the chords of the twisted cubic $\C$ to the \emph{axes} of $\Gamma$.
\par A point $P$ lies on a line $\ell$ in $\PG$ if and only if $\ell^\sigma$ is contained in the plane $P^\sigma$ where $\sigma$ is the polarity in $\PG$ defined as above. For a line $\ell$, if $\ell^{\sigma}=\ell$, then $\ell$ is called \emph{self\;polar}. We will often refer to the following well known lemma.

\begin{lemma}\label{selfdual}
The self polar lines which pass through a given point $P$ of $\C$ are all the lines through $P$ in the polar plane $P^{\sigma}$ of $P$.
\end{lemma}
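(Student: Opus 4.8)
The plan is to translate the statement entirely into the incidence condition already recorded in the excerpt, namely that $P$ lies on a line $\ell$ if and only if $\ell^{\sigma}\subseteq P^{\sigma}$, and then use the fact that $\sigma$ is a polarity (in particular an incidence-reversing involution on subspaces). First I would fix a point $P$ on $\C$ and recall that, since $P\in\C$, the polar plane $P^{\sigma}$ equals the osculating plane $\Pi(t)$ at $P$, and crucially that $P\in P^{\sigma}$ (the osculating plane meets $\C$ at $P$ with multiplicity three, so it certainly contains $P$; equivalently $P$ is an absolute point of the symplectic polarity $\sigma$, as all points are for a symplectic polarity). Thus every line $\ell$ through $P$ that also lies in $P^{\sigma}$ is a well-defined line of the plane $P^{\sigma}$ passing through $P$, and there are exactly $q+1$ such lines.

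Next I would show each such line is self polar. Let $\ell\subseteq P^{\sigma}$ with $P\in\ell$. Applying $\sigma$ to the inclusion $\ell\subseteq P^{\sigma}$ and using that $\sigma$ reverses inclusions and is an involution gives $P=(P^{\sigma})^{\sigma}\subseteq \ell^{\sigma}$, so $P\in\ell^{\sigma}$; hence $\ell^{\sigma}$ is a line through $P$. Applying $\sigma$ to $P\in\ell$ gives $\ell^{\sigma}\subseteq P^{\sigma}$. Therefore $\ell^{\sigma}$ is also a line through $P$ contained in the plane $P^{\sigma}$. Now I invoke the incidence criterion from the excerpt in the other direction: a point $Q$ lies on $\ell$ iff $\ell^{\sigma}\subseteq Q^{\sigma}$. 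Since $\sigma$ is a bijection on subspaces and $\ell$, $\ell^{\sigma}$ both pass through $P$ and lie in $P^{\sigma}$, the map $\ell\mapsto\ell^{\sigma}$ restricts to a permutation of the pencil of $q+1$ lines through $P$ in the plane $P^{\sigma}$. To see this permutation is the identity, note that for any two lines $\ell, m$ of this pencil, $\ell\subseteq m^{\sigma}$ would force (applying $\sigma$) $m\subseteq\ell^{\sigma}$; so "$\ell^{\sigma}=m$" is a symmetric relation, and being a permutation it must be an involution. A symplectic polarity, however, has no totally isotropic lines that are interchanged in pairs within a single pencil through an absolute point, because the restriction of the alternating form to the $3$-space $P^{\sigma}$ (whose radical is exactly the line... ) — here I would instead argue directly: the radical of the restriction of the symplectic form to the hyperplane $P^{\sigma}$ is the line $\langle P\rangle^{\perp\perp}\cap\dots$; cleaner is to observe that $\ell$ is self polar iff $\ell$ is totally isotropic for the form defining $\sigma$, and every line through $P$ inside $P^{\sigma}=P^{\perp}$ is totally isotropic since it is contained in $P^{\perp}$ and contains $P$, so the form vanishes on it identically.

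So the cleanest route, which I would ultimately write up, is: a line $\ell$ is self polar under $\sigma$ exactly when $\ell$ is totally isotropic with respect to the alternating bilinear form $B$ defining $\sigma$; for $P\in\C$ and $\ell$ a line through $P$, we have $\ell\subseteq P^{\sigma}=P^{\perp_B}$ iff $B(P,\cdot)$ vanishes on $\ell$, and since $\ell\ni P$ this is equivalent to $B$ vanishing on all of $\ell$ (write $\ell=\langle P,R\rangle$; then $B|_\ell$ is determined by $B(P,P)=0$, $B(R,R)=0$, and $B(P,R)$, which is zero precisely when $R\in P^{\perp_B}$). This gives both inclusions at once: the self polar lines through $P$ are exactly the lines through $P$ contained in $P^{\sigma}$. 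The main obstacle is purely expository — choosing between the "permutation of the pencil" argument and the "totally isotropic" argument and making sure the equivalence "$\ell$ self polar $\iff$ $\ell$ totally isotropic" is either cited or given a one-line justification — rather than any genuine mathematical difficulty; everything reduces to the involutive, incidence-reversing nature of $\sigma$ and the fact that points of $\C$ are absolute points of the symplectic polarity.
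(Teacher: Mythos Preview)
The paper does not actually prove this lemma; it is stated as ``well known'' and left without proof. Your final argument via the alternating bilinear form $B$ defining $\sigma$ is correct and is the standard one-line justification: in a $4$-dimensional symplectic space a line $\ell$ satisfies $\ell^{\sigma}=\ell^{\perp_B}=\ell$ if and only if $\ell$ is totally isotropic (both subspaces have dimension $2$, so $\ell\subseteq\ell^{\perp_B}$ forces equality), and for $\ell=\langle P,R\rangle$ this happens precisely when $B(P,R)=0$, i.e.\ when $R\in P^{\sigma}$.

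Your initial ``permutation of the pencil'' route establishes correctly that $\sigma$ permutes the $q+1$ lines through $P$ in $P^{\sigma}$, but the attempt to conclude that this permutation is the identity stalls (you start invoking radicals and then abandon the line of reasoning mid-sentence). That is not a gap in the final write-up, since you explicitly discard it in favour of the clean bilinear-form argument, but in a polished version you should simply delete that detour: the totally isotropic characterisation gives both inclusions simultaneously and needs nothing else.
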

For a point $P\in \C$, there is a unique tangent line $\ell$ through $P$ in the osculating plane $P^{\sigma}$. By Lemma~\ref{selfdual}, the tangent line $\ell$ is a self polar line through $P$. Also, a unisecant line in the osculating plane $P^{\sigma}$ which passes through the point $P \in \C$ is a self polar line. The following lemma describes the polar of the intersection of two osculating planes.

\begin{lemma} \label{chord} If $\ell$ is a line contained in the intersection of two osculating planes $\Pi_1$, $\Pi_2$ where $\Pi_1$ is an osculating plane at $P_1$ of $\C$ and $\Pi_2$ is an osculating plane at $P_2$ of $\C$, then $\ell^{\sigma}$ is a chord through points $P_1$, $P_2$ of the twisted cubic $\C$.
\end{lemma}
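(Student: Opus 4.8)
The plan is to exploit the inclusion-reversing property of the symplectic polarity $\sigma$ together with the fact, recorded above, that for a point $P$ of $\C$ the osculating plane at $P$ is exactly the polar plane $P^{\sigma}$.

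First I would observe that we may assume $\Pi_1\neq\Pi_2$ (otherwise $P_1=P_2$ and the statement is to be read accordingly); then $\ell$ is forced to be the unique common line $\Pi_1\cap\Pi_2$. Since $P_1,P_2\in\C$ we have $\Pi_1=P_1^{\sigma}$ and $\Pi_2=P_2^{\sigma}$. Being a polarity, $\sigma$ is an involutory correlation of $\PG$: it interchanges points and planes, maps lines to lines, reverses incidence, and satisfies $(X^{\sigma})^{\sigma}=X$ for every subspace $X$. Applying $\sigma$ to the inclusion $\ell\subseteq\Pi_1=P_1^{\sigma}$ gives $P_1=(P_1^{\sigma})^{\sigma}\subseteq\ell^{\sigma}$, and likewise $\ell\subseteq\Pi_2=P_2^{\sigma}$ gives $P_2\subseteq\ell^{\sigma}$. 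Hence $\ell^{\sigma}$ is a line of $\PG$ through the two distinct points $P_1,P_2$ of $\C$, i.e. $\ell^{\sigma}=\langle P_1,P_2\rangle$ is the (real) chord of $\C$ joining them, as claimed.

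As a sanity check one could also argue in coordinates: using the $3$-transitivity of $G$ on $\C$ we may take $P_i=P(t_i)$ with $t_1,t_2\notin\{0,\infty\}$, write down $\ell=\Pi(t_1)\cap\Pi(t_2)$ from the osculating-plane equations $-t_i^3Y_0+3t_i^2Y_1-3t_iY_2+Y_3=0$, apply $\sigma\colon(y_0,y_1,y_2,y_3)\mapsto[-y_3,3y_2,-3y_1,y_0]$, and recognise the result as the chord whose equations $t_1t_2Y_1-(t_1+t_2)Y_2+Y_3=0$ and $-t_1t_2Y_0+(t_1+t_2)Y_1-Y_2=0$ are recorded above; but this computation is not needed.

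There is no genuinely hard step here. The only points requiring care are the interpretation of ``two osculating planes'' (which must be taken distinct, so that their intersection is a line and $\ell$ is thereby determined), the correct use of $\sigma^{2}=\mathrm{id}$ and of inclusion reversal for subspaces of arbitrary dimension, and the fact that $\sigma$ is a well-defined non-degenerate (null) polarity — which relies on the standing hypothesis that $q$ is odd and not divisible by $3$.
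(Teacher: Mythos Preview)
Your argument is correct: applying the involutory, inclusion-reversing correlation $\sigma$ to $\ell\subseteq\Pi_i=P_i^{\sigma}$ immediately yields $P_i\in\ell^{\sigma}$, so $\ell^{\sigma}$ is the chord $\langle P_1,P_2\rangle$. Note that the paper states this lemma without proof (it is recorded as a standard fact about the symplectic polarity associated with the twisted cubic), so there is no paper proof to compare against; your write-up simply supplies the straightforward justification, and the coordinate sanity check you sketch is indeed unnecessary.
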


\par The lines of $\PG$ can be partitioned into $6$ classes which are unions of orbits under $G$. Each of these classes is denoted by $\mathcal{O}_i$, $i\in \{1,...,6\}$, and the set of polar lines of lines in the class $\mathcal{O}_i$ is denoted by $\mathcal{O}^\perp_i$.
\begin{itemize}
\item The class $\mathcal{O}_1$ contains real chords of the twisted cubic $\C$ and has size $q(q+1)/2$. Its dual $\mathcal{O}^\perp_1$ contains the real axes of $\Gamma$.
\item The class $\mathcal{O}_2=\mathcal{O}^\perp_2$ contains tangents of the twisted cubic $\C$ and has size $q+1$.
\item The class $\mathcal{O}_3$ contains imaginary chords and has size $q(q-1)/2$. Its dual $\mathcal{O}^\perp_3$ contains the imaginary axes of $\Gamma$.
\item The class $\mathcal{O}_4=\mathcal{O}^\perp_4$ contains non-tangent unisecants  in osculating planes and has size $q(q+1)$.
\item The class $\mathcal{O}_5$ contains unisecants not in osculating planes and has size $q(q^2-1)$. Its dual $\mathcal{O}^\perp_5$ contains external lines in osculating planes.
\item The class $\mathcal{O}_6=\mathcal{O}^\perp_6$ contains external lines, (not chords and not in osculating planes) and has size $q(q-1)(q^2-1)$.
\end{itemize}

\par As mentioned above, the $G$-orbits on points and planes of $\PGG(3,q)$ are well understood. In order to define the combinatorial invariants for the $G$-orbits on lines (which will be introduced below) which form the main body of our study, we give the description of these $G$-orbits. Also, since the classification of lines in $\PGG(3,q)$ under the action of $G$ is equivalent to the classification of pencils of cubics in $\PGG(1,q)$, we include the relation between cubics of $\PGG(1,q)$ and planes of $\PG$ (under the map $\delta_3$).
There are $5$ types of cubics in $\PGG(1,q)$, giving $5$ $G$-orbits of planes in $\PGG(3,q)$.
\begin{itemize}
\item The cubics which consist of a triple point are called \emph{type\;$1$} cubic. The corresponding planes are the osculating planes and they form one $G$-orbit $\mathcal{H}_1$ of size $q+1$.
\item A cubic which has a double point and a single point is called a \emph{type\;$2$} cubic. The corresponding planes form one $G$-orbit. There are $q(q+1)$ such planes, and this orbit is denoted by $\mathcal{H}_2$.
\item A cubic which has $3$ distinct points is called a \emph{type\;$3$} cubic, and corresponds to a plane meeting the twisted cubic in 3 distinct points. There are $q(q^2-1)/6$ such planes and they form one $G$-orbit denoted by $\mathcal{H}_3$.
\item A cubic which has a $1$ single point and $2$ imaginary points is called a \emph{type\;$4$} cubic.  These cubics correspond to the $q(q^2-1)/2$ planes through exactly $1$ point of the twisted cubic $\C$, which are not osculating planes. They form the $G$-orbit $\cH_4$.
\item A cubic which has $3$ imaginary points is called a \emph{type\;$5$} cubic.
The corresponding $G$-orbits of planes is denoted by $\cH_5$ and consists of $q(q^2-1)/3$ planes.
\end{itemize}

The five types of cubics on $\PGG(1,q)$ (equivalently the five $G$-orbits $\cH_1, \ldots, \cH_5$) will be represented using the following diagrams. The vertical line represents $\PGG(1,q)$ and on it the number of $\F$-rational points of the cubic are represented. Simple points are represented by bullets, double points by a bullet and a circle, and triple points by a bullet and two circles.
\begin{figure}[!h]
\centering
\begin{tikzpicture}[scale=0.4]
\draw (-6,0)--(-6,4);
\draw[fill=black] (-6,3) circle (3.0 pt);
\draw (-6,3) circle (6.0 pt);
\draw (-6,3) circle (9.0 pt);
\draw (-6,-1) node (a) [label=below:$\cH_1$]{};
\draw (-3,0)--(-3,4);
\draw[fill=black] (-3,3) circle (3.0 pt);
\draw (-3,3) circle (6.0 pt);
\draw[fill=black] (-3,2) circle (3.0 pt);
\draw (-3,-1) node (a) [label=below:$\cH_2$]{};
\draw (0,0)--(0,4);
\draw[fill=black] (0,3) circle (3.0 pt);
\draw[fill=black] (0,2) circle (3.0 pt);
\draw[fill=black] (0,1) circle (3.0 pt);
\draw (0,-1) node (a) [label=below:$\cH_3$]{};
\draw (0,0)--(0,4);
\draw[fill=black] (0,3) circle (3.0 pt);
\draw[fill=black] (0,2) circle (3.0 pt);
\draw[fill=black] (0,1) circle (3.0 pt);
\draw (0,-1) node (a) [label=below:$\cH_3$]{};
\draw (3,0)--(3,4);
\draw[fill=black] (3,3) circle (3.0 pt);
\draw[fill=black] (3,2) node[cross=2.0 pt]{};
\draw[fill=black] (3,1) node[cross=2.0 pt]{};
\draw (3,-1) node (1) [label=below:$\cH_4$]{};
\draw (6,0)--(6,4);
\draw[fill=black] (6,3) node[cross=2.0 pt]{};
\draw[fill=black] (6,2) node[cross=2.0 pt]{};
\draw[fill=black] (6,1) node[cross=2.0 pt]{};
\draw (6,-1) node (1) [label=below:$\cH_5$]{};
\end{tikzpicture}
\caption{The five $G$-orbits of planes in $\PG$}
\end{figure}
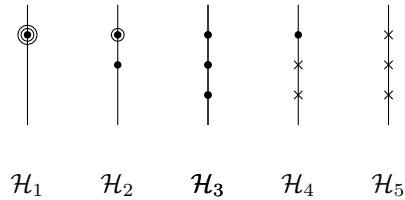
The position of the points on the vertical lines are meaningless and will be changed at will. For example, the $G$-orbit $\cH_2$ may also be represented by its diagram above with the positions of the simple point and the double point interchanged.

 The $5$ $G$-orbits of points in $\PGG(3,q)$ are as follows.
\begin{itemize}
\item[] $\mathcal{P}_1$. Points on $\C$ corresponding to osculating planes of $\C$ under $\sigma$.
\item[] $\mathcal{P}_2$. Points on tangent lines of the twisted cubic (not on the twisted cubic) correspond to planes containing exactly $2$ points of $\C$ under $\sigma$.
\item[] $\mathcal{P}_3$. Points on three osculating planes correspond to planes through $3$ distinct points of $\C$ under $\sigma$.
\item[] $\mathcal{P}_4$. Points not on $\C$ which lie on exactly one osculating plane correspond to planes through exactly one point of $\C$ under $\sigma$.
\item[] $\mathcal{P}_5$. Finally, the points which do not lie on any osculating plane correspond to planes containing no point of $\C$ under $\sigma$.
\end{itemize}

We are now in a position to define the combinatorial invariants in which we are interested.

\begin{definition}
The \emph{point\;orbit\;distribution} of a line $\ell$ in $\PG$, is denoted by
$OD_0(\ell)$ is a list $[a_0,b_0,c_0,d_0$, $e_0]$ where the entries $a_0$, $b_0$, $c_0$, $d_0$ and $e_0$ are the number of points on $\ell$ in the $G$-orbits on points of $\PGG(3,q)$ as listed above.
Similarly, we define the \emph{plane\;orbit\;distribution} of a line $\ell$ in $\PG$, which we denote by $OD_2(\ell)=[a_2,b_2,c_2,d_2,e_2]$. Clearly, we must have $a_{2}+b_{2}+c_{2}+d_{2}+e_{2}=a_{0}+b_{0}+c_{0}+d_{0}+e_{0}=q+1$.
\end{definition}

Each of the $G$-orbits of lines in $\PGG(3,q)$ is denoted by $\cL_i$ and the $G$-orbit of $\ell^\sigma$ with $\ell \in {\mathcal{L}}_i$ is denoted by $\mathcal{L}^\perp_i$.
For each $G$-orbit, we will include a diagram representing the corresponding pencils of cubics, say $\cP(\Cu_1,\Cu_2)$, on $\PGG(1,q)$.
The diagram consists of two vertical lines, each representing a copy of $\PGG(1,q)$ with its cubic $\Cu_i$, $i=1,2$, as explained above.
Horizontal dashed lines are used to indicate that the point on the two copies of $\PGG(1,q)$ coincide. For example the pencil $\cP(\Cu_1,\Cu_2)$ with $\Cu_1=\cZ(X_0^2X_1)$ and $\Cu_2=\cZ(X_0(X_0+X_1)^2)$ is represented by the following diagram.
\begin{figure}[!h]
\centering
\begin{tikzpicture}[scale=0.4]
\draw (-3,0)--(-3,4);
\draw[fill=black] (-3,3) circle (3.0 pt);
\draw (-3,3) circle (6.0 pt);
\draw[fill=black] (-3,2) circle (3.0 pt);
\draw (0,0)--(0,4);
\draw[fill=black] (0,3) circle (3.0 pt);
\draw[fill=black] (0,1) circle (3.0 pt);
\draw (0,1) circle (6.0 pt);
\draw [dashed] (-2.7,3)--(-0.3,3);
\end{tikzpicture}
\end{figure}

The corresponding $G$-orbit of lines in $\PG$ will be represented by the same diagram.
Obviously different diagrams may represent the same $G$-orbit. As we will see, some diagrams represent a unique $G$-orbit of lines in $\PG$, while others represent several $G$-orbits.

The set of squares of $\F$ is denoted by $\square$ and the set of non-squares of $\F$ is denoted by $\triangle$.

\section{Lines contained in osculating planes}


\end{center}

 The remaining $G$-orbits of lines partition the class $\cO_6$ consisting of external lines to $\C$ which are not imaginary chords and which are not contained in osculating planes. There are in total $q(q-1)(q^2-1)$ such lines. As far as we know the $G$-orbits of these lines are not classified yet. We conjecture that there are $2q-2$ or $2q-4$ $G$-orbits contained in $\cO_6$ depending on whether $q$ is 1 or 5 modulo 6. For reasons of clarity, we state our conjecture in terms of the total number of $G$-orbits of lines in $\PG$, i.e. including the 10 $G$-orbits which we already know.

\begin{conjecture}
The number of line orbits under the action of $G$ is $9+(2q-1)$, if $q\equiv 1$ (mod $6$), $G$ is $9+(2q-3)$ if $q\equiv 5$ (mod $6$).
\end{conjecture}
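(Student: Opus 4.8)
\emph{Proof strategy.} The orbits $\cL_1,\dots,\cL_{10}$ exhaust the classes $\cO_1,\dots,\cO_5$ together with their polars, so the conjecture is equivalent to the assertion that $G$ acts on the remaining class $\cO_6$ with exactly $2q-2$ orbits when $q\equiv 1\pmod 6$ and with $2q-4$ orbits when $q\equiv 5\pmod 6$. The most robust route to the \emph{count} is the Cauchy--Frobenius (Burnside) lemma for the action of $G\cong\PGL(2,q)$ on the $(q^2+1)(q^2+q+1)$ lines of $\PG$: the number of $G$-orbits of lines equals $\frac{1}{|G|}\sum_{g\in G}\mathrm{fix}(g)$, where $\mathrm{fix}(g)$ is the number of lines of $\PG$ fixed by $g$ and $|G|=q^3-q$. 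Proving the conjecture then amounts to evaluating this sum and checking that it equals $9+(2q-1)$, respectively $9+(2q-3)$.

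The plan is to organise the computation by the $q+2$ conjugacy classes of $\PGL(2,q)$: the identity; the class of transvections, of size $q^2-1$; the $(q-1)/2$ split semisimple classes (eigenvalues $\lambda,\mu$ of $g$ on $\F^2$ lying in $\F$); and the $(q+1)/2$ elliptic semisimple classes (eigenvalues a conjugate pair in $\mathbb{F}_{q^2}$); two of these classes, one split and one elliptic, are the involution classes, of sizes $q(q+1)/2$ and $q(q-1)/2$. For a representative $g$ one uses that $\varphi(g)$ acts on $\F^4$ as $\mathrm{Sym}^3$ of the $2$-dimensional action of $g$, with eigenvalues $\lambda^3,\lambda^2\mu,\lambda\mu^2,\mu^3$, and counts the $g$-invariant $2$-dimensional subspaces from the rational canonical form: this gives $6$ for a split element with four distinct rational eigenvalues, $2$ for a generic elliptic element, and a modified value when $g$ is a transvection, when $\lambda/\mu=-1$, or when $\lambda/\mu$ is a primitive cube root of unity. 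The key observation is that this last coincidence requires $3\mid q-1$ for a split element and $3\mid q+1$ for an elliptic one; since $\F$ contains a primitive cube root of unity (equivalently $-3\in\square$) precisely when $q\equiv 1\pmod 6$, and since one checks that the elliptic cube-root element still fixes only $2$ lines whereas the split one fixes $2q+4$ lines (rather than $6$) and lies in a class of size $q(q+1)$, the presence of this split class contributes a surplus $q(q+1)\bigl((2q+4)-6\bigr)=2(q^3-q)=2|G|$ to the Burnside sum, i.e.\ exactly two extra orbits. Carrying out the remaining (case-independent) contributions and dividing by $|G|$ should give $2q+6$ for $q\equiv 5\pmod 6$ and $2q+8$ for $q\equiv 1\pmod 6$, which is the conjecture.

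An alternative, which would additionally recover the point and plane orbit distributions of the $\cO_6$-orbits, is a direct classification extending the $J$-invariant of Section~3. A line $\ell\in\cO_6$ is the pencil $\cP(\Cu_1,\Cu_2)$ of a coprime pair $(f_1,f_2)$ with no member of type $1$; its Jacobian $\partial_{X_0}f_1\,\partial_{X_1}f_2-\partial_{X_1}f_1\,\partial_{X_0}f_2$ is a binary quartic with four distinct roots, and the value $J(\ell)$ of the $J$-invariant on these roots is a $G$-invariant. Via the $\mathrm{SL}_2$-decomposition $\wedge^2 S^3\F^2\cong S^4\F^2\oplus\F$ --- which exhibits the lines of $\PG$ as the points of the Klein quadric $Q^+(5,q)$ carrying the $\mathrm{Sym}^4\oplus\mathrm{triv}$ action of $G$ --- one expects $\ell$ to be determined up to $G$ by the $\PGL(2,q)$-orbit of its Jacobian quartic together with a single square-class datum, so that parametrising such pairs and discarding the degenerate loci (members of type $1$, non-coprime pencils, imaginary chords) would produce the $2q-2$ resp.\ $2q-4$ orbits. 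In both approaches the main obstacle is the bookkeeping of the degenerate loci and of the eigenvalue coincidences: these are governed by the square classes of $-1$, $-3$ and of various other field elements, and one must verify that all dependence on $q\bmod 4$ (and on individual square classes) cancels, so that only the residue of $q$ modulo $6$ survives in the final orbit count.
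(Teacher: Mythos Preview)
The statement you are attempting to prove is presented in the paper as a \emph{conjecture}; the authors give no proof, and the final remark only notes that the conjectured numbers are consistent with subsequent arXiv preprints \cite{tcubicorbits,BlPeSz}. So there is no argument in the paper to compare yours against.

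That said, your Cauchy--Frobenius strategy is a sound route to the bare orbit count, and your identification of the mechanism behind the mod-$6$ dichotomy is correct and is the heart of the matter: a split semisimple $g\in\PGL(2,q)$ with eigenvalue ratio a primitive cube root of unity exists precisely when $3\mid q-1$, and then $\varphi(g)$ has a repeated eigenvalue on $\F^4$, so that it fixes $2q+4$ lines rather than the generic $6$; since that conjugacy class has size $q(q+1)$, the surplus is exactly $2|G|$, i.e.\ two extra orbits. Your check that the elliptic cube-root element (which exists when $3\mid q+1$) still fixes only $2$ lines, because the two $1$-dimensional eigenspaces for the non-repeated eigenvalues are Galois-conjugate and cannot be combined with a line of the $2$-dimensional eigenspace to give an $\F$-rational plane, is also correct.

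What is missing is the actual computation. You assert that the remaining contributions are ``case-independent'' and ``should give'' $2q+6$, but you do not evaluate them: the unipotent class (where $\varphi(g)$ is a regular unipotent in $\mathrm{GL}(4,q)$ and fixes a single line), the two involution classes (where $\varphi(g)$ has two $2$-dimensional eigenspaces and the fixed-line count jumps to $(q+1)^2+2$ in the split case, with the elliptic case requiring a separate Galois-descent argument), and the $(q-3)/2+(q-1)/2$ generic semisimple classes all need to be tabulated and summed. You also need to confirm that the dependence on $q\bmod 4$ coming from the two involution classes (whose sizes are $q(q\pm1)/2$) really cancels in the total. None of this is difficult, but none of it is done; as written, the proposal is a strategy with one key lemma verified, not a proof. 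Your alternative $J$-invariant approach via the Jacobian quartic is more ambitious and would yield finer information, but it is even more schematic here and would require substantially more work to make rigorous.
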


\begin{remark}
The results reported in this paper are part of the PhD thesis of the first author; they were described in an online talk given by the second author \cite{talk} as part of the seminar series
{\it Seminars and e-Seminars Arbeitsgemeinschaft in Codierungstheorie und Kryptographie}, at the Mathematics Institute of the University of Zurich in Switzerland; and they are in accordance with the results contained in \cite{tcubicorbits}, and \cite{BlPeSz} recently submitted to the arXiv.
\end{remark}

\bibliographystyle{amsplain}

\end{document}